 \theoremstyle{plain}
\newtheorem{theo}{Theorem}[section]
\newtheorem{pr}[theo]{Proposition}
 \newtheorem{lem}[theo]{Lemma}
 \newtheorem{coro}[theo]{Corollary}
\theoremstyle{remark}
\newtheorem{rema}[theo]{Remark}
\theoremstyle{definition}
\newtheorem{defi}[theo]{Definition}
\newcommand \cu{\underline{C}}
\newcommand\ilim\varinjlim
\newcommand\obj{\operatorname{Obj}}
\newcommand\ab{{Ab}}
\newcommand\n{\mathbb{N}}
\newcommand\z{{\mathbb{Z}}}
\DeclareMathOperator\co{\operatorname{Cone}}
\DeclareMathOperator\mo{\operatorname{Mor}}
\begin{document}

\title
 {A Nullstellensatz for triangulated categories}
 \author{M.V. Bondarko\thanks{ 
 The first author was supported by RFBR 
grant no.   14-01-00393-a,  by  Dmitry Zimin's Foundation "Dynasty", and by the Scientific schools grant no. 3856.2014.1.
}, V.A. Sosnilo\thanks{
 The second author was  supported by the Chebyshev Laboratory
(Department of Mathematics and Mechanics, St. Petersburg State University)  under the RF Government grant 11.G34.31.0026, and also by the JSC "Gazprom Neft".
Both authors were supported by RFBR 
grant no. 15-01-03034-a. 
}}
\maketitle

\begin{abstract}
The main goal of this paper is to prove the following: for  a  triangulated
category  $ \underline{C}$ and $E\subset \operatorname{Obj} \underline{C}$ there exists a cohomological functor
$F$ (with values in some abelian category) 
such that
$E$ is 
its set of zeros if (and only if) $E$ is closed
with respect to retracts and extensions (so, we obtain a certain Nullstellensatz for functors of this type). Moreover, for
$ \underline{C}$ being an $R$-linear category (where $R$ is a commutative ring) this is also equivalent to the existence of 
an $R$-linear functor 
 $F: \underline{C}^{op}\to  R-\operatorname{mod}$ satisfying this property. 
As a corollary, we prove that an object $Y$ belongs to the corresponding
"envelope" of some $D\subset \operatorname{Obj}  \underline{C}$
whenever
the same is true for the images of $Y$ and $D$ in all the categories $ \underline{C}_p$ obtained from $ \underline{C}$ by means of
"localizing the coefficients" at  
maximal ideals $p \triangleleft R$. Moreover, to prove our theorem we develop certain new methods for relating triangulated categories to their (non-full) countable triangulated subcategories. 

The results of this paper can be
applied to 
 weight structures and
triangulated categories of motives.
\end{abstract}

\tableofcontents

\section*{Introduction}

Certainly, for any class of cohomological functors $\{F_i\}$ from a triangulated category $ \underline{C}$ (with values in some abelian categories $\underline{A}_i$) the  class $E=\{c\in \operatorname{Obj} \underline{C},\ F_i(c) = 0\ \forall i\}$  is extension-closed 
and Karoubi-closed in $ \underline{C}$ (i.e., it is  closed
with respect to retracts and "extensions").
In this paper we prove that (somewhat surprisingly) the converse statement is also true (assuming   that $\operatorname{Obj}  \underline{C}$ is a set to avoid set-theoretical difficulties).

Being more precise, we prove the following statement.

\begin{theo}\label{extsunc}
Let $ \underline{C}$ be a small $R$-linear triangulated category (where $R$ is a commutative unital ring); let $E$ be an extension-closed Karoubi-closed subset of $\operatorname{Obj}  \underline{C}$. 
Then for any $Y \in \operatorname{Obj} \underline{C}\setminus E$ 
there exists an $R$-linear ("separating") cohomological functor $F^Y: \underline{C}^{op}\to  R-\operatorname{mod}$ 
 such that $F^Y|_E = 0$ and $F^Y(Y)\neq \{0\}$. 
\end{theo}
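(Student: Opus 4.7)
\smallskip
\noindent\textbf{Proof sketch (plan).} My starting point would be the representable functor $h_Y = \operatorname{Hom}_{\cu}(-,Y)\colon \cu^{op}\to \rmod$, which is $R$-linear and automatically cohomological, but which of course need not vanish on $E$. I would therefore first form the subfunctor $K_0\subseteq h_Y$ with $K_0(X) = \{g\colon X\to Y : g$ factors through some $Z\in E\}$. Since $E$ is extension-closed and Karoubi-closed it is closed under finite direct sums, so $K_0(X)$ is an $R$-submodule of $\operatorname{Hom}(X,Y)$ and $K_0$ is functorial in $X$. The quotient $h_Y/K_0$ vanishes on $E$; moreover $[\id_Y]$ is nonzero in $(h_Y/K_0)(Y)$, since otherwise $\id_Y$ would factor through an object of $E$, exhibiting $Y$ as a retract of that object and so forcing $Y\in E$ by Karoubi-closure, contrary to hypothesis. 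What prevents $h_Y/K_0$ from being the desired $F^Y$ is the failure of cohomologicality.

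\smallskip
\noindent\textbf{Reduction to the countable case.} The next step is to invoke the paper's announced methods for relating $\cu$ to its (non-full) countable triangulated subcategories. Concretely, I would find a countable triangulated subcategory $\cu_0\subseteq \cu$ containing $Y$ and rich enough to witness $Y\notin E$; more precisely one wants the intersection $E_0 = E\cap \obj\cu_0$ to remain extension-closed and Karoubi-closed in $\cu_0$ with $Y\notin E_0$. The plan is then to build $F^Y$ on $\cu_0$ first and transport it back to $\cu$ in a way that preserves both $F(Y)\neq 0$ and $F|_E=0$.

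\smallskip
\noindent\textbf{Iterative cohomologisation.} Inside the countable $\cu_0$ I would enumerate the potential obstructions to cohomologicality: pairs $(\Delta,[g])$ where $\Delta=(X\xrightarrow{\alpha}X'\xrightarrow{\beta}X''\to X[1])$ is a distinguished triangle of $\cu_0$ and $[g]\in F(X')$ satisfies $[g\alpha]=0$ while $[g]$ is not in the image of $F(X'')\to F(X')$. Starting from $F_0 = h_Y/K_0|_{\cu_0}$, process the obstructions in order to produce a sequence $F_0\to F_1\to F_2\to \cdots$ of $R$-linear functors all vanishing on $E_0$. Two kinds of moves are available: when $X''\notin E_0$, take the pushout in the functor category that freely adjoins a class $\tilde h\in F_{n+1}(X'')$ with $\tilde h\circ\beta=[g]$; when $X''\in E_0$, enlarge the relation subfunctor in order to kill $[g]$, then also kill any new elements that would appear at objects of $E_0$. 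The colimit $F_\infty = \inli_n F_n$ should then be cohomological, $R$-linear, and vanish on $E_0$.

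\smallskip
\noindent\textbf{The main obstacle.} The hardest point is to prove that $[\id_Y]$ survives to $F_\infty(Y)$. Every killing move threatens to impose a relation $[\id_Y]=0$, and controlling this requires an octahedral/diagram-chasing argument: a hypothetical chain of killings culminating in $[\id_Y]=0$ would, after tracing through the factorisations through $E_0$ and cones in $E_0$ at each stage, assemble $Y$ as an iterated extension (with retracts) of objects of $E_0$; extension- and Karoubi-closure of $E_0$ would then force $Y\in E_0$, contradicting the choice of $\cu_0$. Orchestrating this inductive bookkeeping, together with arranging the transfer of $F_\infty$ from $\cu_0$ back to $\cu$ without losing $F(Y)\neq 0$, is where the genuine technical work should lie—and, presumably, the reason why the paper develops the machinery of countable (non-full) triangulated subcategories in the first place.
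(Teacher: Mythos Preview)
Your reduction to a countable subcategory is in the right spirit, but the heart of your plan---the ``iterative cohomologisation'' of $h_Y/K_0$---does not work as stated, and the paper's proof avoids this pitfall by a different and much cleaner mechanism. Your pushout move, when $X''\notin E_0$, forms $F_{n+1}=h_{X''}\sqcup_{h_{X'}}F_n$ in the functor category; evaluating at $e\in E_0$ gives $F_{n+1}(e)\cong\cok\bigl(\cu_0(e,X')\to\cu_0(e,X'')\bigr)$, which has no reason to vanish. So the very first pushout typically destroys $F|_{E_0}=0$, and you would then have to kill these new elements, creating further obstructions, with no evident termination. Your killing move when $X''\in E_0$ is also not benign: from $g\alpha$ factoring through $E_0$ and $X''\in E_0$ it does not follow that $g$ itself factors through $E_0$, so imposing $[g]=0$ can genuinely threaten $[\id_Y]$ in ways your outlined octahedral bookkeeping does not control.

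The paper sidesteps all of this by never leaving the world of representable functors. In the countable case it builds a tower $Y=Y_0\to Y_1\to\cdots$ in $\cu_0$ itself, where each $Y_n$ is the cone of a map $\bigoplus e_i\to Y_{n-1}$ chosen (via an enumeration) so that every morphism $e_i\to Y_k$ is eventually killed; then $F^Y=\varinjlim\cu_0(-,Y_n)$ is cohomological for free (filtered colimits of cohomological functors into $\rmod$ are cohomological), vanishes on $E_0$ by construction, and $[\id_Y]\neq 0$ because its death would exhibit $Y$ as a retract of $\co(\alpha_{0,N})[-1]\in E_0$. For the passage back to $\cu$, a single countable $\cu_0$ does not suffice: the paper instead takes the \emph{reduced product} of the functors $F^Y_C$ over the directed set of \emph{all} countable triangulated subcategories $C$ (and, for uncountable $R$, a further reduced product over countable subrings), composed with a canonical embedding $\cu\hookrightarrow\prod_{\to}C$. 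This is the machinery you allude to, and it is essential precisely because there is no natural extension from one $\cu_0$ to $\cu$.
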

Certainly, this result can be applied in the (most general) case $R=\z$. Thus (cf.  Corollary \ref{mcoro} below) $E\subset \obj \underline{C}$ is the set of zeroes of some collection of cohomological functors
if and only if it is extension-closed and Karoubi-closed; this a certain Nullstellensatz for functors of this type (whence the name of the paper).

Now we explain our motivation for studying this question.
For various triangulated categories the existence of certain distinguished classes of objects is very important; in particular, one is often interested 
in sets of zeros for certain collections of (co)representable functors (since such classes can yield $t$-structures, weight structures, and more generally Hom-orthogonal pairs; see Definition 3.1 in \cite{postov}, as well as \cite{talosa}, \cite{paukcomp}, and \cite{bgern}). Yet it 
is scarcely possible to describe in general all 
classes of objects that can be characterized this way  (for 
arbitrary families of representable functors; note still that quite important results related to this question were obtained in the aforementioned papers). This led the authors to modifying this description problem (and obtaining Theorem \ref{extsunc} as a complete answer to the corresponding question). 

This theorem has a corollary that seems to be interesting for itself. 
For $D\subset \operatorname{Obj} \underline{C}$ let us call a class $E\subset \operatorname{Obj}  \underline{C}$ 
the {\it envelope} of $D$ if it is the smallest subclass of $\operatorname{Obj}  \underline{C}$ that contains $D\cup  \{0\}$ and is closed with respect to retracts and extensions. 

We note the following: if $F: \underline{C}^{op} \to  R-\operatorname{mod}$ is a cohomological functor (where $R$ is a commutative unital ring) and $M$ is a flat $R$-module then
the ("naive tensor product") functor $F(-)\otimes M: \underline{C}^{op}\to  R-\operatorname{mod}$ is cohomological also. 
Considering these functors for $M=R_{p}$, where $p$ runs through all maximal ideas of $R$, we easily deduce the following statement. 

 \begin{coro}\label{clocoeff} 
 Let $ \underline{C}$ be an $R$-linear triangulated category. For any maximal ideal $p \triangleleft R$ 
denote by $R_p$ the localization of $R$ at $p$, and 
denote by $ \underline{C}_p$ the triangulated category whose objects are those of  $ \underline{C}$, and whose morphisms are obtained by tensoring the $ \underline{C}$-ones by $R_p$ (over $R$; see 
 Proposition \ref{trtens} below). 
Denote by $L_p$ the obvious functor $ \underline{C}\to  \underline{C}_p$.

Let $D\subset \operatorname{Obj}  \underline{C}$,  $Y\in \operatorname{Obj} \underline{C}$.
Then $Y$ belongs to the envelope $E$ of $D$ in $ \underline{C}$  if and only if 
$L_p(Y) $ belongs to  the envelope of $L_p(D)$ (in $ \underline{C}_p$) for every maximal ideal $p$ of $R$.
\end{coro}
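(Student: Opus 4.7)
The plan is to prove the corollary by combining Theorem \ref{extsunc} with the elementary flatness fact that a nonzero $R$-module has a nonzero localization at some maximal ideal; one direction is essentially formal from functoriality, so the content sits in the reverse implication.

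First I would dispose of the ``only if'' direction. The functor $L_p: \underline{C}\to \underline{C}_p$ is (by construction in Proposition \ref{trtens}) an $R$-linear triangulated functor; in particular it sends distinguished triangles to distinguished triangles and commutes with taking retracts. Thus $L_p$ carries the envelope of $D$ into any Karoubi-closed extension-closed class of $\obj \underline{C}_p$ containing $L_p(D)$, and in particular into the envelope of $L_p(D)$. Hence if $Y$ lies in the envelope $E$ of $D$, then $L_p(Y)$ lies in the envelope of $L_p(D)$ for every $p$.

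For the ``if'' direction I would argue by contraposition. Assume $Y\notin E$. By Theorem \ref{extsunc} applied to the envelope $E$ of $D$ (which is extension-closed and Karoubi-closed by definition), there is an $R$-linear cohomological functor $F: \underline{C}^{op}\to R\text{-mod}$ with $F|_E=0$ and $F(Y)\neq 0$. Since $F(Y)$ is a nonzero $R$-module, the standard localization principle yields a maximal ideal $p\triangleleft R$ with $F(Y)\otimes_R R_p\neq 0$. Now consider the composite $G = F(-)\otimes_R R_p: \underline{C}^{op}\to R\text{-mod}$, which is cohomological by the flatness remark preceding the corollary, and $R$-linear; moreover it is $R_p$-linear because its target is naturally an $R_p$-module. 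Since morphisms in $\underline{C}_p$ are by construction $R_p$-linear combinations of morphisms in $\underline{C}$ (i.e.\ obtained by tensoring Hom groups with $R_p$), the functor $G$ factors uniquely through $L_p$ as a cohomological $R_p$-linear functor $\tilde F: \underline{C}_p^{op}\to R_p\text{-mod}$ with $\tilde F\circ L_p^{op} = G$.

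It remains to observe that $\tilde F|_{L_p(D)}=0$ (since $D\subset E$ and $F|_E=0$, so $G|_D = 0$) while $\tilde F(L_p(Y)) = F(Y)\otimes_R R_p\neq 0$. The zero set of $\tilde F$ in $\obj\underline{C}_p$ is automatically extension-closed and Karoubi-closed, hence contains the envelope of $L_p(D)$; therefore $L_p(Y)$ does not belong to this envelope, which is exactly what we wanted. The only mildly nontrivial point is the factorization $G = \tilde F\circ L_p^{op}$: one must check that $F$ being $R$-linear together with $R_p$ being an $R$-algebra makes $G$ descend to the category $\underline{C}_p$ of Proposition \ref{trtens}, which is a routine verification using the universal property of tensoring Hom groups with $R_p$. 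This is the step that carries all of the (modest) bookkeeping in the argument.
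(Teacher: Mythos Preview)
Your argument is essentially the same as the paper's: tensor the separating functor with $R_p$, factor through $\underline{C}_p$, and invoke the fact that an $R$-module vanishes if and only if all its localizations at maximal ideals vanish. The contrapositive phrasing versus the paper's direct one is cosmetic, and your use of Theorem \ref{extsunc} (one functor per $Y$) in place of the paper's Corollary \ref{mcoro} (one functor detecting all of $E$) makes no real difference.

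There is, however, one genuine gap. Theorem \ref{extsunc} is stated only for \emph{small} $\underline{C}$, whereas Corollary \ref{clocoeff} carries no smallness hypothesis. You invoke the theorem directly without addressing this. The paper does not skip this: it first treats the small case exactly as you do, and then reduces the general case to it. The reduction goes as follows: for each maximal $p$, the hypothesis $L_p(Y)\in$ (envelope of $L_p(D)$) is witnessed by finitely many objects $D_p\subset D$; one then chooses a small full triangulated $R$-linear subcategory $\underline{C}'\subset\underline{C}$ containing $Y$ and $\bigcup_p D_p$, observes that $\underline{C}'_p$ is a full subcategory of $\underline{C}_p$ so that the envelope witnesses survive, and applies the small case to $\underline{C}'$. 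This step is not hard, but it is not automatic either, and your write-up needs it.
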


Note that 
$L_p$ (for various $ \underline{C}$ any any prime ideal $p$ of $R$) is exactly "the natural localization of coefficients functor" (corresponding to passing from $R$-linear categories to $R_{p}$-linear ones); see Appendix A.2 of \cite{kellyth}  for a thorough study    of this construction in the case $R=\z$ and Proposition B.1.5 of \cite{cdet} for the general case. 
	
	We would also like to note that our motivation for considering the functors of the type $L_p$ along with the corollary stated was the following "motivic" one:  certain resolution of singularities statements needed for the study of various motivic categories (that were proved by Gabber) can only be applied "directly" to various $R_{p}$-linear categories of 
	motivic origin (
	whereas $R$ is usually a localization of $\z$). 
	Whereas the ("triangulated") properties of $L_p$ were sufficient for the "globalization of coefficients" arguments in ibid. and in \cite{kellyth} (since the 
	 study of the "triangulated" envelope of $ \cup_{i\in \z}D[i]$ for certain $D$ was sufficient for the aforementioned papers, and in this setting it suffices to apply the theory of Verdier quotients), for some other problems (including the one studied in \cite{bzp})  the consideration of  "general" envelopes is quite relevant.  

Now we give some more detail on the contents of the paper.


In \S\ref{snotata} we introduce some basic (and more or less common) categorical definitions.

In 
\S\ref{prcount} we 
prove Theorem \ref{extsunc} in the case where both $E$  
 and all the hom-sets in $ \underline{C}$ are countable. Our idea is to construct $F^Y$ as a (direct) limit of representable functors, whereas the construction of the corresponding objects is closely related to the "approximation" methods introduced in \S III.2 of \cite{bre} (that were also applied in the proof of the main statement of \cite{paukcomp}; the obvious duals of these arguments were used in the proof of Theorem 2.2.6 of \cite{bgern}).

In \S\ref{prgen} we introduce a certain (new) technique 
that relates general triangulated categories to 
 their subcategories with  countable sets of objects and morphisms. It easily yields the proof of the theorem in the case  $R = \z$; this proof also works for the $R$-linear case if $R$ is at most countable. Finally, in 
\S\ref{rcas} we prove the theorem in general (via approximating the ring $R$ by its countable subrings). Actually, the corresponding argument (that may possibly be interesting for itself, especially in conjunction with other methods of the section) was our main reason for formulating the main results for $R$-linear categories; yet the reader may certainly restrict himself to the case $R=\z$ (that seems to be sufficient for the motivic applications the authors have in mind) throughout the paper.


In 
\S\ref{applc} we prove Corollary \ref{clocoeff}.

 The authors are deeply grateful to prof. A.I. Generalov for his very useful comments.

\section{Notation and conventions}\label{snotata}

\begin{itemize}

\item Given a category $C$ and  $X,Y\in\operatorname{Obj} C$, we denote by
$C(X,Y)$ the set of morphisms from $X$ to $Y$ in $C$.


\item  $R$ is a commutative unital ring.

\item $ \underline{C}$ below will always denote an $R$-linear triangulated category; we will assume it to be small till \S\ref{applc}. 

\item $\underline{A}$ will be an abelian category.

\item
For categories $C$ and $C'$ we will write $H:C \to C'$ only if $H$ is a covariant functor.
So, 
we will say that $F$ is a cohomological functor from $ \underline{C}$ to $\underline{A}$ and write $F: \underline{C}^{op}\to \underline{A}$ if 
$F$ is contravariant on $ \underline{C}$ and sends $ \underline{C}$-distinguished triangles into long exact sequences.
Note yet that 
an alternative convention is often used for this matter; then such an  $F$ is called a cohomological functor from $ \underline{C}^{op}$ into $\underline{A}$
(cf. \cite{krause}).



\item For any  $A,B,C \in \operatorname{Obj} \underline{C}$ we will say that $C$ is an {\it extension} of $B$ by $A$ if there exists a distinguished triangle $A \to C \to B \to A[1]$ (in $ \underline{C}$).
 A class $E\subset \operatorname{Obj}  \underline{C}$ is said to be  {\it extension-closed}
    if it 
		is closed with respect to extensions and contains $0$. 
		
		\item 
		For $X,Y\in\operatorname{Obj}  \underline{C}$, we say that $X$ is a {\it
retract} of $Y$ 
 if $\operatorname{id}_X$ can be 
 factored through $Y$ (
since $ \underline{C}$ is triangulated, $X$ is a retract of $Y$ if and only if $X$ is its direct summand).
A 
class $E\subset \operatorname{Obj}  \underline{C}$
is said to be {\it Karoubi-closed}
  in $ \underline{C}$ if $E$
contains all $ \underline{C}$-retracts of its elements. 

		\item  We will call the smallest extension-closed Karoubi-closed subclass 
of $\operatorname{Obj} \underline{C}$ that  contains a given class $D\subset \operatorname{Obj} \underline{C}$ 
  the 
 {\it envelope} of $D$ (in $ \underline{C}$).

\item Given $f\in \underline{C} (X,Y)$ (where $X,Y\in\operatorname{Obj} \underline{C}$) we will call the third vertex
of (any) distinguished triangle $X\stackrel{f}{\to}Y\to Z$ a {\it cone} of
$f$ (recall that various choices of cones are isomorphic, but the choice of these isomorphisms is non-canonical).

\item A directed set is a set with a partial order such that every pair of elements has an upper bound. 

\item Let $(I,\succeq  )$ be a directed set and let $x\in I$. By $I_x$ we denote the set of 
$i\in I$ such that $i\succeq  x$. We will say that a certain property of $i\in I$ is fulfilled for sufficiently large $i$ if it is fulfilled for all $i\in I_x$ (for some $x\in I$).

\item We will assume that the set $\mathbb{N}$ of natural numbers starts from $1$. 

\end{itemize}

\section{Proof of the theorem in the countable case}\label{prcount}

In this section we prove the statement of 
Theorem \ref{extsunc} under certain (rather strong) countability restrictions. 

\begin{pr}\label{exts}
Suppose all hom-sets in $ \underline{C}$ are at most countable; let $E = \{e_i\}_{i\in \mathbb{N}}$ be 
a countable extension-closed and Karoubi-closed subset of $ \underline{C}$.
Then for any $Y \in \operatorname{Obj} \underline{C}\setminus E$ there exists an $R$-linear cohomological functor $F^Y: \underline{C}^{op}\to R-\operatorname{mod}$ 
  such that $F^Y|_E = 0$ and $F^Y(Y)\neq \{0\}$. 
\end{pr}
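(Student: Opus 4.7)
The plan is to realize $F^Y$ as a filtered colimit $\ilim_n \underline{C}(-, X_n)$ of representable functors along a tower $Y = X_0 \xrightarrow{\alpha_0} X_1 \xrightarrow{\alpha_1} \cdots$ to be built by iteratively taking cones of morphisms out of elements of $E$. Since each $\underline{C}(-, X_n)$ is an $R$-linear cohomological functor into $\rmod$ and filtered colimits in $\rmod$ are exact, the resulting $F^Y$ will automatically be $R$-linear and cohomological.

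By the countability hypotheses on $E$ and on the hom-sets of $\underline{C}$, one can enumerate (dynamically, as the tower is built, via a Cantor-pairing scheme) all triples $(i,n,f)$ with $f \in \underline{C}(e_i, X_n)$ so that each is processed at some step $m \geq n$. At step $m$ I would let $\tilde f_m : e_{a_m} \to X_m$ be the composite of the scheduled $f$ with the appropriate tower map, and take $X_{m+1}$ to be a cone of $\tilde f_m$, giving a distinguished triangle $e_{a_m} \xrightarrow{\tilde f_m} X_m \xrightarrow{\alpha_m} X_{m+1} \xrightarrow{\delta_m} e_{a_m}[1]$. By construction, each morphism from an element of $E$ into some $X_n$ is annihilated after finitely many further steps, so $F^Y|_E = 0$.

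The main obstacle is proving $F^Y(Y) \neq 0$, equivalently that $g_n : Y \to X_n$ obtained by composing the tower maps (with $g_0 := \id_Y$) is nonzero for every $n$. I would prove the stronger claim by induction on $n$: $g_n$ does not factor through any object of $E$. The base case follows from Karoubi-closedness, since $\id_Y$ factoring through $e \in E$ would make $Y$ a retract of $e$ and hence force $Y \in E$. For the inductive step, suppose $g_{n+1} = h \circ s$ with $s: Y \to e$, $h: e \to X_{n+1}$, $e \in E$. Form the fiber $C'$ of $\delta_n \circ h : e \to e_{a_n}[1]$; rotation of its defining triangle yields $e_{a_n} \to C' \to e \to e_{a_n}[1]$, which exhibits $C'$ as an extension and places $C' \in E$. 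Since $\delta_n \circ h \circ s = \delta_n \circ \alpha_n \circ g_n = 0$, the morphism $s$ lifts to some $s': Y \to C'$; since $\delta_n \circ h \circ (C' \to e) = 0$ as well, the composite $C' \to e \xrightarrow{h} X_{n+1}$ lifts to some $\tilde h': C' \to X_n$. A diagram chase using the exactness of $\underline{C}(Y,-)$ on the triangle $e_{a_n} \xrightarrow{\tilde f_n} X_n \xrightarrow{\alpha_n} X_{n+1}$ then produces $t: Y \to e_{a_n}$ with $g_n = \tilde h' \circ s' + \tilde f_n \circ t$, so $g_n$ factors through $C' \oplus e_{a_n} \in E$, contradicting the inductive hypothesis.
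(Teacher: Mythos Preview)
Your proof is correct and follows the same overall strategy as the paper's: build a tower $Y = X_0 \to X_1 \to \cdots$ by successively coning off morphisms from elements of $E$, and set $F^Y = \ilim_n \underline{C}(-, X_n)$. The differences are purely in implementation. The paper cones off a finite \emph{direct sum} of maps at each stage (so that each $\co(\alpha_{n-1})[-1]$ is manifestly a finite direct sum of elements of $E$), while you cone off one map at a time via a diagonal enumeration; either scheme yields $F^Y|_E = 0$. For the nonvanishing $F^Y(Y) \neq 0$, the paper invokes the octahedral axiom to show that each $\co(\alpha_{0,N})[-1]$ is an iterated extension of objects of $E$ and hence lies in $E$, so that if $\alpha_{0,N} = 0$ then $\id_Y$ factors through an object of $E$. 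Your inductive argument---proving the stronger claim that $g_n$ never factors through any object of $E$, by pulling back along $\delta_n \circ h$ to produce $C' \in E$ and then chasing through the long exact sequence---reaches the same conclusion without explicitly naming the octahedral axiom, trading that single appeal for a slightly longer step-by-step chase.
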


\begin{proof}
For every $Z\in \operatorname{Obj}  \underline{C},\ i\in \mathbb{N}$, we fix some surjection $\mathbb{N} \to  \underline{C}(e_i,Z)$.
We will denote by $f_{Z,i,j}$ the image of $j$ under this map.

Now we construct a certain inductive system $\{Y_i\in \operatorname{Obj}\underline{C}\}_{i\ge 0}$ along with the corresponding connecting morphisms $\alpha_{i}:Y_i\to Y_{i+1}$ (for all $i\ge 0$).
We start from $Y_0 = Y$.
Then we use induction to choose $Y_i$ and the corresponding morphisms.  
For $n>0$ assume we have chosen $Y_k\in \operatorname{Obj}\underline{C}$ for all $0\le k \le n-1$ along with 
$\alpha_k\in \underline{C}(Y_k,Y_{k+1})$ for  $0\le k \le n-2$. 
For $0\le k \le n-1$ denote by $\alpha_{k,n-1}$  the composition $\alpha_{n-2}\circ \cdots \circ \alpha_{k}$ (so, $\alpha_{n-1,n-1}= \operatorname{id}_{Y_{n-1}}$). 
We consider 
$$Y_n = \operatorname{Cone}(\bigoplus\limits_{0\le k\le n-1}\bigoplus\limits_{1\le j\le n}\bigoplus\limits_{1\le i\le n} e_i \xrightarrow{ p_{n-1} \circ (
\bigoplus( \alpha_{k,n-1}\circ f_{Y_{k},i,j})%
} Y_{n-1})$$
where $p_{n-1}$ is the projection morphism $\bigoplus\limits_{0\le k\le n-1}\bigoplus\limits_{1\le j\le n}\bigoplus\limits_{1\le i\le n}Y_{n-1} \to Y_{n-1}$.
To finish the inductive step we take for  $\alpha_{n-1}$ the morphism $Y_{n-1} \to Y_n$ coming from the definition of a cone.

We take $F^Y(-) = \ilim\limits_{n\in \mathbb{N}}  \underline{C}(-, Y_n)$ (with the connecting morphisms induced by the corresponding $\alpha_{k,n}$).
Certainly, $F^Y$ is an $R$-linear cohomological functor.

Now we verify that $F^Y$ 
fulfills the (remaining) conditions desired. Firstly we check that any element $x$ of $ \underline{C}(e_i,Y_n)$ (for some $i,n\in \n$) dies in $ \underline{C}(e_i,Y_N)$ for some $N > n$.
Indeed, there is a number $k$ such that $x = f_{Y_n, i,k}$. Consider $N = \max\{k,n+1\}$. By the definition of $Y_N$, there exists 
a distinguished triangle
$$
\begin{CD} 
e_i \oplus X@>{\begin{pmatrix}\alpha_{n,N-1}\circ x & x'\end{pmatrix}}>>      Y_{N-1}   @>{\alpha_{N-1}}>>     Y_N     @>{}>> (e_i \oplus  X)[1],
\end{CD}
$$
where
$X$ belongs to $E$ and $x'$ is a certain element of $ \underline{C}(X,Y_{N-1})$.
Hence the sequence 
$$\cdots \to  \underline{C}(e_i,e_i \oplus X) \to  \underline{C}(e_i,Y_{N-1}) \stackrel{(\alpha_{N-1})_*}\to  \underline{C}(e_i,Y_N) \to \cdots$$
is exact. 
The morphism $e_i \stackrel{\alpha_{n,N-1}\circ x}\to Y_{N-1}$ factors through $e_i \oplus X$, thus $\alpha_{n,N}\circ x = (\alpha_{N-1})_*(\alpha_{n,N-1}\circ x) = \alpha_{N-1} \circ\alpha_{n,N-1}\circ x = 0$. 

By 
the definition of 
the direct limit of functors, 
we obtain $F^Y(e_i)=\{0\}$ (for all $i\in \n$), i.e., $F^Y|_E = 0$. 

It remains to check whether $F^Y(Y) = \{0\}$. 
If this is the case, then $\operatorname{id}_Y\in  \underline{C}(Y,Y) =  \underline{C}(Y,Y_0)$ goes to zero in $ \underline{C}(Y,Y_N)$ for some  $N\in \mathbb{N}$.
Now, $\operatorname{Cone}(\alpha_{k})[-1]$ belongs to $E$ for any $k \ge 0$ by  construction. Hence  $\co(\alpha_{k,n})[-1]\in E$ for any $0\le k\le n$ (and $\alpha_{k,n}$ defined as above; here we apply the octahedral axiom).
We denote  $\co(\alpha_{0,N})[-1]$ by $X'$; applying the functor $ \underline{C}(Y,-)$ to the distinguished triangle  $X' \to Y \to Y_N \to X'[1]$ we  obtain a long exact sequence 
$$\cdots \to  \underline{C}(Y,X') \to  \underline{C}(Y,Y) \stackrel{(\alpha_{0,N})_*}\to  \underline{C}(Y,Y_N) \to \cdots$$
Since $(\alpha_{0,N})_*(\operatorname{id}_Y)$ is zero, $\operatorname{id}_Y$ factors through $X'$. 
So, $Y$ is a retract of an element of $E$; hence it belongs to $E$ also. 
This contradicts our assumption on $Y$.


\end{proof}

\begin{rema}\label{whynotgenerral}

The proof above doesn't work 
without countability restrictions since it does not seem possible to construct a similar inductive system. 
At each step of 
our construction we only have to construct a 
certain $Y_i$ from $Y_{i-1}$ via considering a certain cone.
However,  constructing the inductive system in 
the uncountable case would certainly require "completing" 
such a sequence of $Y_i$ 
by a certain "transfinite cone". The authors do not know how to achieve this unless $ \underline{C}$ possesses some sort of "enhancement" (for example, a differential graded one is certainly sufficient for these purposes since it can be used to construct certain "canonical cones" of morphisms that allow passing to "transfinite limits").
Possibly, this difficulty is related to the reason that persuaded the authors of \cite{postov} to consider derivators (in their Theorem 3.7).
\end{rema}

\section{Proof of the theorem in the general case}\label{prgen}

In this section we prove  Theorem \ref{extsunc} in the general case (though we have to consider the intermediate case of an at most countable $R$ first).

\subsection{"Approximating" categories by countable subcategories}\label{scountr}

In this section we introduce several constructions and techniques used to prove the main theorem.

Till the end of 
\S\ref{scountr} $R$ will be an at most countable ring.

We start with the following very easy "additive" lemma; next we apply it in the proof of a certain closely related "triangulated" statement (that is just a little more complicated).

\begin{lem}\label{preappr}
For any at most countable set of objects $O$ and at most countable set of morphisms $M$ between elements of $O$ in an (additive) $R$-linear category $\underline{B}$ there exists a (non-full!) $R$-linear subcategory $\underline{B}(M;O)$ of $\underline{B}$ 
whose set of objects and set of morphisms are both countable, such that $\operatorname{Obj} \underline{B}(M;O)$ contains $O$ and $\operatorname{Mor} \underline{B}(M;O)$ contains $M$.
\end{lem}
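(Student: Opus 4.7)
The plan is to construct $\underline{B}(M;O)$ by an iterative closure procedure, starting from the given data and repeatedly adjoining exactly the morphisms forced upon us by the $R$-linear category axioms (identities, composition, $R$-linear combinations between a common source and target). Since $R$ is countable by assumption, each such closure step applied to a countable set of morphisms produces a countable set, so a countable iteration will stabilise to the desired subcategory.

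More precisely, I would set $M_0 = M \cup \{\operatorname{id}_X \mid X \in O\}$, which is a countable set of morphisms between objects of $O$ (identities form a subset indexed by the countable set $O$). Then, inductively, I would define $M_{n+1}$ to be the union of $M_n$ with (a) all compositions $g \circ f$ for $f,g \in M_n$ with matching source/target, and (b) all finite $R$-linear combinations $\sum_i r_i f_i$ of elements of $M_n$ between a common pair of objects in $O$. Because $R$, $M_n$, and the set of finite tuples drawn from $M_n$ are all countable, $M_{n+1}$ stays countable. Finally set $M_\infty = \bigcup_{n \geq 0} M_n$ (still countable, as a countable union of countable sets), and define $\underline{B}(M;O)$ to have object set $O$ and morphism sets $\underline{B}(M;O)(X,Y) = M_\infty \cap \underline{B}(X,Y)$ for $X,Y \in O$.

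To finish, I would check that this really is an $R$-linear subcategory: identities lie in $M_0 \subset M_\infty$; if $f \in M_n$ and $g \in M_m$ are composable, then both belong to $M_{\max(n,m)}$, so $g \circ f \in M_{\max(n,m)+1} \subset M_\infty$; an analogous remark applies to $R$-linear combinations. The containment $M \subset \operatorname{Mor} \underline{B}(M;O)$ and $O \subset \operatorname{Obj} \underline{B}(M;O)$ is immediate from the construction, and the total set of morphisms is a countable union of subsets of a countable set.

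I do not anticipate any real obstacle here; the only thing one has to take care of is ensuring that the closure stays countable at each stage, which is precisely why the countability hypothesis on $R$ (announced at the start of \S\ref{scountr}) is needed. Note that the resulting subcategory is not full in $\underline{B}$ in general, and no attempt is made to close it under direct sums or cones; those additional closure properties are dealt with separately in the subsequent triangulated analogue.
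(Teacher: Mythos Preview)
Your argument is correct. The paper's proof differs in two respects. First, rather than iterating a closure operator, it writes down the result in one step: for $o_1,o_2\in O$ the hom-set is defined directly as the set of all finite $R$-linear combinations of composable chains $f^{n_i}_i\circ\cdots\circ f^1_i$ with each $f^k_i\in M$, which is visibly closed under composition and the $R$-action and coincides with your $M_\infty\cap\underline{B}(o_1,o_2)$. Second, and more substantively, the paper also enlarges the object set by adjoining a chosen representative of $\bigoplus_{i=1}^n o_i$ for every finite tuple from $O$ (with matrix hom-sets), so that the resulting subcategory is \emph{additive}, not merely $R$-linear. Your construction meets the lemma as literally stated, and your closing remark is accurate: additivity is not strictly needed here, since in the subsequent triangulated step direct sums can be recovered as cones of zero morphisms. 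The paper simply elects to build additivity in at this preliminary stage.
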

\begin{proof}
For any finite set $o_1,\cdots, o_n$ of elements of $O$ ($n\ge 0$) choose a representative in the isomorphism class of $\bigoplus\limits_{i=1}^n o_i$ in $\operatorname{Obj} \underline{B}$. Denote it by $S(o_1,\cdots,o_n)$;
  denote the countable set $\{S(o_1,\cdots, o_n) | n\ge 0, 
o_i \in O\}$ by $O'$. 

Now take  $\underline{B}(M;O)$ being the subcategory of $\underline{B}$ whose set of objects is $O'$ and the sets of morphisms 
 are as follows:
$\underline{B}(M;O)(o_1,o_2) = \{\sum\limits_{i=1}^n r_if^{n_i}_i\circ \cdots\circ f^{1}_i \} $; here we consider all composable chains (of lengths $n_i\ge 0$) of elements of $M$ such that the domain of $f^1_i$ is $o_1$, the codomain of $f^{n_i}_i$ is $o_2$, and $ r_i\in R $. By definition, the latter set is an $R$-submodule of $\underline{B}(o_1,o_2)$.
For any $o_1,\cdots,o_m, o'_1, \cdots, o'_n\in O$ (where $m,n\ge 0$) we set $\underline{B}(M;O)(\bigoplus\limits_{i=1}^m o_i, \bigoplus\limits_{i=1}^n o'_i)$ as the set of matrices whose $(i,j)$-entry belongs to $\underline{B}(M;O)(o_i,o'_j)$. 
The composition of morphisms in $\underline{B}$ certainly restricts to these sets; so 
we obtain an $R$-linear (additive) subcategory of $\underline{B}$.
\end{proof}

\begin{pr}\label{appr}
Let 
$ \underline{C}$ be a small $R$-linear triangulated category.
Then  for any  countable 
 $O\subset \operatorname{Obj}  \underline{C}$ and 
 any countable set of morphisms $M$ between elements of $O$ 
there exists an 
 $R$-linear triangulated subcategory $ \underline{C}_{tr}(M;O)$ of $ \underline{C}$  
whose set of objects and set of morphisms are both countable, such that $\operatorname{Obj}  \underline{C}_{tr}(M;O)$ contains $O$ and $\operatorname{Mor}  \underline{C}_{tr}(M;O)$ contains $M$.
\end{pr}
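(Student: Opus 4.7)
The plan is to construct $\underline{C}_{tr}(M;O)$ as the union of an ascending chain of countable subcategories obtained by iterating Lemma~\ref{preappr} and alternately throwing in the objects and morphisms forced by the triangulated structure. Since $\underline{C}$ is small I would, once and for all, fix (i) for every $f\in\operatorname{Mor}\underline{C}$ a chosen distinguished triangle $A\xrightarrow{f}B\xrightarrow{g_f}C_f\xrightarrow{h_f}A[1]$, (ii) for every pair of composable morphisms a chosen octahedron realizing TR4, (iii) for every commuting square between two of the chosen triangles a chosen completing morphism realizing TR3, and (iv) a choice of direct-sum representative $S(X_1,\dots,X_m)$ for every finite tuple in $\operatorname{Obj}\underline{C}$, so that Lemma~\ref{preappr} may be applied coherently at every stage.

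Set $(\mathcal O_0,\mathcal M_0)=(O,M)$ and define $(\mathcal O_{n+1},\mathcal M_{n+1})$ inductively. First apply Lemma~\ref{preappr} (using the global $S(-)$) to $(\mathcal O_n,\mathcal M_n)$ to obtain a countable $R$-linear additive subcategory $\underline{B}_n$ with object/morphism sets $(\mathcal O_n^\sharp,\mathcal M_n^\sharp)$. Then enlarge by adjoining, to $\mathcal O_n^\sharp$, the shifts $X[\pm 1]$ for $X\in\mathcal O_n^\sharp$, the chosen cones $C_f$ for $f\in\mathcal M_n^\sharp$, and the extra objects appearing in octahedra of composable pairs from $\mathcal M_n^\sharp$; and, to $\mathcal M_n^\sharp$, the shifted morphisms $f[\pm 1]$, the connecting morphisms $g_f,h_f$, all octahedron morphisms, and the chosen TR3-completions for every square built out of morphisms of $\mathcal M_n^\sharp$ between distinguished triangles already exhibited in $\mathcal M_n^\sharp$. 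Each enlargement introduces only countably many new items (here the countability hypothesis on $R$ is essential, since it keeps the $R$-linear combinations in Lemma~\ref{preappr} countable), so $(\mathcal O_{n+1},\mathcal M_{n+1})$ stays countable.

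Finally take $\mathcal O_\infty=\bigcup_n\mathcal O_n^\sharp$ and $\mathcal M_\infty=\bigcup_n\mathcal M_n^\sharp$, both countable, and let $\underline{C}_{tr}(M;O)$ be the subcategory they define, whose distinguished triangles are the distinguished triangles of $\underline{C}$ all of whose objects and morphisms lie in $\mathcal O_\infty$ and $\mathcal M_\infty$. Any morphism of $\underline{C}_{tr}(M;O)$ appears at some finite stage, so its chosen cone is already present at the next stage; likewise every commuting square has a TR3-completion and every composable pair an octahedron in the union. Together with closure under shifts, $R$-linear combinations, compositions, and biproducts (all inherited from the $\underline{B}_n$ under the uniform $S$-choice), this shows that $\underline{C}_{tr}(M;O)$ is a (non-full) $R$-linear triangulated subcategory of $\underline{C}$ containing $O$ and $M$.

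The main obstacle is precisely the non-fullness: unlike for a full triangulated subcategory, where TR3 and TR4 are automatic once shifts and cones are handled, here one must anticipate every morphism ever demanded by the axioms and adjoin it explicitly. The sequential bookkeeping above ensures that at each stage only countably many new morphisms and objects are introduced and that nothing axiom-forced is left out by the countable union; no single closure operation is enough, but the alternation between \emph{additivize} (via Lemma~\ref{preappr}) and \emph{augment by triangulated data} does the job in $\omega$ steps.
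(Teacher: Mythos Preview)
Your proof is correct and follows essentially the same inductive L\"owenheim--Skolem approach as the paper's: alternate between additivizing via Lemma~\ref{preappr} and adjoining the data forced by the triangulated structure (shifts, cones, octahedra), then take the countable union. The only noteworthy difference is that you explicitly adjoin TR3-completions at each stage, whereas the paper omits this step entirely and instead invokes Lemma~2.2 of \cite{mayaddtr} (May's observation that TR3 is a consequence of the remaining axioms)---a shortcut the authors themselves acknowledge in Remark~\ref{mtheor}(2) is optional.
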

\begin{proof}
First we construct some countable sets $O^{(n)}, M^{(n)}$ of $ \underline{C}$-objects and morphisms for $n\ge 0$. 

The construction is inductive.
We start from  $O^{(0)} = O$ and $M^{(0)} = M$. 

To make the inductive step, for $n\ge 1$ we consider the corresponding  $(O^{(n-1)}, M^{(n-1)})$ and choose a countable $R$-linear subcategory \linebreak $ \underline{C}(O^{(n-1)}; M^{(n-1)})$ in $ \underline{C}$ (see Lemma \ref{preappr}). We  denote the  set of objects  and morphisms in  $ \underline{C}(O^{(n-1)}, M^{(n-1)})$   by $O'$ and $M'$, respectively. 
Next, for every 
$f \in M'$ choose a distinguished triangle containing it  and add  the corresponding (countable and shift-stable) data to $(O',M')$, 
obtaining certain sets  $O''$ and $M''$, 
respectively. 
 Finally, for every lower cap of a $ \underline{C}$-octahedral diagram whose 
morphisms belong $M''$  one chooses  an upper cap for it in $ \underline{C}$  and adds all the "new" objects and 
morphisms  
to $O''$ and $M''$, 
 respectively. 
We denote the sets obtained 
as a result  of  the latter series of operations by $O^{(n)}$ and $M^{(n)},$ respectively. 

The sets $\bigcup_{n\in \mathbb{N}} O^{(n)}$, $\bigcup_{n\in \mathbb{N}} M^{(n)}$ obviously give certain 
$R$-linear subcategory $ \underline{C}_{tr}(M;O)$ of $ \underline{C}$; the shift functors for $ \underline{C}$ (i.e.,  the functors $[n]$ for $n\in \z$) can certainly be restricted to it.
We define  the distinguished triangles in $ \underline{C}_{tr}(M;O)$ as
 those triangles of $ \underline{C}_{tr}(M;O)$-morphisms that are distinguished in $ \underline{C}$. By construction, all the axioms of triangulated categories are fulfilled in $ \underline{C}_{tr}(M;O)$ (we do not have to ensure the validity of the axiom TR3 in  $ \underline{C}_{tr}(M;O)$ by Lemma 2.2 of \cite{mayaddtr}). 
\end{proof}

\begin{rema}\label{mtheor}
1. Certainly, the proofs of Lemma \ref{preappr} and Proposition \ref{appr} are 
 closely related to the proof of the seminal L\"{o}wenheim-Skolem theorem. 

2. Certainly, we could have ensured the validity of the axiom TR3 in  $ \underline{C}_{tr}(M;O)$ "directly" (i.e., by adding certain morphisms to $M^{(n)}$ at each step). 
\end{rema}


Now we consider certain versions of "filtered products" of objects, categories, and functors.
Note first that any (small) product of triangulated categories has the natural structure of a triangulated category (resp. any small product of abelian categories is abelian). So we can introduce the following definition.

\begin{defi}\label{limcat}
Let $(I, \succeq  )$ be a directed set; let $\underline{A}_i,\ i\in I$, be triangulated (resp. abelian) categories. 

We define the reduced product  $\prod\limits_{(\to,i\in I)} \underline{A}_i$ as the Verdier localization (resp. the Serre localization) of the product  
$\prod\limits_{i\in I} \underline{A}_i$ by the full triangulated subcategory (resp. the full Serre subcategory) whose objects are 
the families   $(a_i)$, $a_i\in \operatorname{Obj} \underline{A}_i$, 
such that $a_n=0$ 
 for sufficiently large $n$. 

Certainly, if  all $\underline{A}_i$ are $A$-linear (for some commutative unital  ring $A$) then their reduced product is $A$-linear also.

In case  no confusion can arise we will just write $\prod\limits_{\to} \underline{A}_i$.
\end{defi}

\begin{rema}\label{limrem}
1. This reduced product can be characterized as the direct categorical limit $\ilim\limits_{n\in I} \prod\limits_{i\in I_n} \underline{A}_i$ (where $I_n$ is defined in  \S\ref{snotata}); here 
for $n' \succeq  n$ the corresponding functor  $\prod\limits_{i \in I_n} \underline{A}_i \to  \prod\limits_{i\in I_{n'}} \underline{A}_i$ is the natural projection.

2. A family  $(f_i) \in \prod\underline{A}_i(X,Y)$ yields a zero morphism in $\prod\limits_{\to} \underline{A}_i$ if and only if 
$f_n=0$ for sufficiently large $n$ (see \S\ref{snotata}).

In the case of triangulated $\underline{A}_i$ 
we have the following characterization of distinguished triangles in $\prod\limits_{\to} \underline{A}_i$: 
a family of morphisms $(X_n) \stackrel{f_n}\to (Y_n) \stackrel{g_n}\to (Z_n)\to (X_n[1])$ yields a distinguished triangle  if and only if 
$X_n \to Y_n \to Z_n \to X_n[1]$ are distinguished triangles for sufficiently large $n$.
Similarly, in the case  where $\underline{A}_i$ are abelian, 
a family of morphisms $(X_n) \stackrel{f_n}\to (Y_n) \stackrel{g_n}\to (Z_n)$ 
yields an exact sequence in $\prod\limits_{\to} \underline{A}_i$ if and only if 
$g_n\circ f_n=0$  for $n$ large enough and the corresponding morphisms $\operatorname{Im} f_n \to \operatorname{Ker} g_n$ are isomorphisms for $n$ large enough. The latter is equivalent to  $X_n \to Y_n \to Z_n$ being exact sequences for $n$ large enough.

3. Let $A$ be a (unital, commutative) ring and let $F_i: \underline{C}_i^{op} \to A-\operatorname{mod}$  be a family of $A$-linear cohomological functors (for some $A$-linear triangulated $ \underline{C}_i$, $i \in I$). 
Denote by $G$ the ($A$-linear) functor from the "reduced power"\ $\prod\limits_{(\to,i\in I)} A-\operatorname{mod}$  into $A-\operatorname{mod}$ that maps a family 
$(M_i\in \operatorname{Obj} A-\operatorname{mod})$ into  $\ilim\limits_{n\in I} \prod\limits_{i\in I_n} M_i$. 
Certainly, $G((M_i))=\{0\}$ 
if and only if $M_i = \{0\}$ for $i$ large enough.   
Denote by $F'$ the composition $\prod\limits_{\to}  \underline{C}_i^{op} \stackrel{\prod\limits_{\to} F_i}\to \prod\limits_{\to} A-\operatorname{mod} \stackrel{G}\to A-\operatorname{mod}$. 
Then we define the reduced product of 
 $F_i$ as the functor $\prod\limits_{\to}F_i :\prod\limits_{\to}  \underline{C}_i^{op} \to A-\operatorname{mod}$ such that 
the composition $\prod  \underline{C}_i^{op} \to \prod\limits_{\to} \underline{C}_i^{op} \to A-\operatorname{mod}$ is equal to $F'$ (which exists and is unique by the universal property of localization).

4. If $I$ has the greatest element $i^{max}$, then certainly  $ \prod\limits_{\to} \underline{A}_i\cong \underline{A}_{i^{max}}$. 
Thus the category $\mathfrak{C}$ defined in Lemma \ref{appremb} is isomorphic to  $ \underline{C}$ if $ \underline{C}$ is countable; if $R$ is countable, then the functor 
$F^Y$  that we construct in the proof of Theorem  \ref{extsunc} (in \S\ref{rcas} below) is isomorphic to the corresponding  $\hat{F}^Y_R$. 
\end{rema}

The following lemma is crucial for the proof of Theorem \ref{extsunc}.

Denote by $I$ be the set of countable $R$-linear triangulated subcategories (i.e., of  those subcategories whose set of objects  and set of morphisms are both countable) of $ \underline{C}$ ordered by (non-full) exact inclusions (it is a directed set thanks to Proposition \ref{appr}; recall that we assume $ \underline{C}$ to be a small category).

\begin{lem}\label{appremb}
Denote by $\mathfrak{C}$ the reduced product $\prod\limits_{(\to,C\in I)} C$. Then there exists a faithful exact $R$-linear functor $F:  \underline{C} \to \mathfrak{C}$  sending $X \in \operatorname{Obj}  \underline{C}$ into the family $(X_C)_{C\in I}$, where we set $X_C = X$ if $X \in \operatorname{Obj} C$ and $X_C = 0$ otherwise.
\end{lem}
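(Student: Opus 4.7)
The plan is to construct $F$ by specifying its action on objects (as prescribed in the statement) and defining it on morphisms via the direct-limit description of $\mathfrak{C}$ from Remark \ref{limrem}(1). The key technical tool throughout will be Proposition \ref{appr}, which guarantees that any finite collection of objects and morphisms in $ \underline{C}$ can be engulfed in a countable $R$-linear triangulated subcategory, i.e., in an element of $I$.

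For $f \in  \underline{C}(X, Y)$, I would pick some $C_0 \in I$ containing $X$, $Y$ and $f$ (via Proposition \ref{appr}), and set $F(f)$ to be the morphism in $\mathfrak{C}(F(X), F(Y))$ represented by the constant family $(f)_{C \in I_{C_0}}$ in $\prod_{C \in I_{C_0}} C$. This makes sense because for every $C \succeq C_0$ one actually has $f \in \operatorname{Mor} C$, the partial order on $I$ being given by exact (non-full) inclusions. Independence of the choice of $C_0$ follows from the directedness of $I$: any two such choices $C_0, C_0'$ admit a common upper bound $C_0'' \in I$ via Proposition \ref{appr}, and both defining families restrict to the same family $(f)_{C \in I_{C_0''}}$. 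Compatibility with the componentwise description $F(X) = (X_C)_{C \in I}$ stated in the lemma amounts to observing that the ``padding zeros'' on indices $C$ with $X \notin \operatorname{Obj} C$ are identified out by the localization in Definition \ref{limcat}.

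Functoriality, additivity, and $R$-linearity of $F$ reduce, for any finite collection of morphisms, to choosing a single $C_0 \in I$ containing all of them and transferring the corresponding identities from $C_0$ to the reduced product. For exactness, given a distinguished triangle $X \to Y \to Z \to X[1]$ in $ \underline{C}$, I would apply Proposition \ref{appr} to produce $C_0 \in I$ containing all three objects and all three morphisms; by the very definition of distinguished triangles in the constructed subcategories (see the proof of Proposition \ref{appr}), together with the fact that inclusions in $I$ are exact, the triangle remains distinguished in every $C \succeq C_0$, and Remark \ref{limrem}(2) then shows that $F$ sends it to a distinguished triangle in $\mathfrak{C}$.

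Faithfulness is the easiest step: if $F(f) = 0$ in $\mathfrak{C}$ for $f \in  \underline{C}(X, Y)$, then by Remark \ref{limrem}(2) there exists some $C \in I$ containing $X$, $Y$ and $f$ in which $f = 0$, and hence $f = 0$ in $ \underline{C}$ since $C(X, Y) \subset  \underline{C}(X, Y)$. The main obstacle I anticipate is not deep but is bookkeeping: reconciling the stated componentwise description of $F$ on objects with the direct-limit representation used to define $F$ on morphisms. This is ultimately handled by the directedness of $I$ (supplied by Proposition \ref{appr}) combined with the explicit criteria of Remark \ref{limrem}.
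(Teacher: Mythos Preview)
Your proposal is correct and follows essentially the same route as the paper. The only cosmetic difference is that the paper defines $F(f)$ componentwise as the family $(F_C(f))_{C\in I}$ with $F_C(f)=f$ if $f\in\operatorname{Mor} C$ and $0$ otherwise, whereas you pass directly to a cofinal representative $(f)_{C\in I_{C_0}}$; both definitions name the same element of $\mathfrak{C}(F(X),F(Y))$, and the verifications of functoriality, exactness, and faithfulness via Proposition~\ref{appr} and Remark~\ref{limrem}(2) are identical in substance.
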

\begin{proof}
For any morphism $f \in  \underline{C}(X, Y)$ we define $F(f)$ as the class of the family $(F_C(f))$, where we set  $F_C(f)=f$ if $f \in \mo C$ (for $C\in I$)  and $F_C(f)=0$ 
otherwise.
Now we check that this correspondence yields a functor. Take some composable morphisms $f$ and $g$ in $ \underline{C}$. 
By Remark \ref{limrem}(2), it suffices to show that $F_C(f) \circ F_C(g) = F_C(f\circ g)$ for $C$ large enough. 
By Proposition \ref{appr}, there exists a category $C'\in I$ containing $f$ and $g$ (and so, also $f\circ g$). Obviously, $F_C(f) \circ F_C(g) = F_C(f\circ g)$ for any $C \in I$ such that $C'$ is a (triangulated) subcategory of $C$.

Next, $F$ is certainly faithful,  $R$-linear, and respects shifts.

It remains to prove that $F$ sends distinguished triangles into distinguished triangles. 
Let $X \to Y \to Z \to X[1]$ be a distinguished triangle in $ \underline{C}$.
By Remark \ref{limrem}(2), it suffices to show that the triangles $F_C(X) \to F_C(Y) \to F_C(Z) \to F_C(X)[1]$ are distinguished  for $C$ large enough. 
By Proposition \ref{appr}, there exists a category $C'\in I$ containing the triangle $X\to Y\to Z \to X[1]$. Obviously,  the triangle $F_C(X) \to F_C(Y) \to F_C(Z) \to F_C(X)[1]$ is distinguished for any $C \in I$ such that $C'$ is a (triangulated) subcategory of $C$.
\end{proof}

\begin{coro}\label{funct}
Assume that for any $C\in I$ there exists an $R$-linear cohomological functor 
 $\mathcal{F}_C:C^{op}\to R-\operatorname{mod}$.
Then there exists an $R$-linear cohomological functor $\mathcal{F}: \underline{C}^{op} \to 
 R-\operatorname{mod}$ such that $\mathcal{F}(X)$ is zero if and only if $\mathcal{F}_C(X)=\{0\}$  for large enough  $C$  satisfying $X \in \operatorname{Obj} C$.
\end{coro}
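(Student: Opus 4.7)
The plan is to take $\mathcal{F}$ to be the reduced product of the functors $\{\mathcal{F}_C\}$ (in the sense of Remark \ref{limrem}(3)) pulled back along the functor $F$ of Lemma \ref{appremb}. More precisely, I would form $\mathcal{F}' = \prod_{\to} \mathcal{F}_C : \cum^{op} \to R-\operatorname{mod}$, using the natural identification $\cum^{op} \cong \prod_{\to} C^{op}$ (Verdier localization commutes with passage to the opposite category, and products obviously do), and then set $\mathcal{F} = \mathcal{F}' \circ F^{op}$.

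To verify that $\mathcal{F}$ is cohomological, I would first reduce to checking this for $\mathcal{F}'$ on $\cum$, since $F$ is exact by Lemma \ref{appremb}. Given a distinguished triangle in $\cum$, Remark \ref{limrem}(2) represents it by a family $X_C \to Y_C \to Z_C \to X_C[1]$ that is distinguished in $C$ for $C$ large enough, so each $\mathcal{F}_C$ converts it into a long exact sequence for $C$ large enough. The crucial step is that the auxiliary functor $G : \prod_{\to} R-\operatorname{mod} \to R-\operatorname{mod}$ of Remark \ref{limrem}(3) is exact: the abelian analog of Remark \ref{limrem}(2) characterizes exactness in the reduced product componentwise for sufficiently large index, and $G$ sends such a family to the filtered colimit (over $n \in I$) of the corresponding products indexed by $I_n$, and both filtered colimits and arbitrary products preserve exactness in $R-\operatorname{mod}$.

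For the zero behavior I would just compute $\mathcal{F}(X) = G((\mathcal{F}_C(F_C(X)))_{C \in I})$ directly. By Proposition \ref{appr}, the subset of $C \in I$ containing $X$ is cofinal, and for such $C$ one has $F_C(X) = X$ by the definition in Lemma \ref{appremb}; hence $\mathcal{F}_C(F_C(X)) = \mathcal{F}_C(X)$ for $C$ large enough. Remark \ref{limrem}(3) then tells us that the $G$-value of this family vanishes if and only if its components vanish for sufficiently large $C$, which is exactly the claimed characterization.

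The main obstacle I anticipate is simply the careful bookkeeping needed to invoke Remark \ref{limrem}: one must check that the componentwise description of distinguished triangles in $\cum$ and the componentwise description of exact sequences in $\prod_{\to} R-\operatorname{mod}$ both apply as expected, and that $G$ is genuinely exact on the localization (rather than only on the pre-localized product). Once these points are settled, the construction and both of its required properties follow by purely formal manipulations with the universal properties of $\prod_{\to}$.
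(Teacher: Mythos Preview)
Your proposal is correct and follows essentially the same approach as the paper: form the reduced product $\mathcal{F}' = \prod_{\to}\mathcal{F}_C$ via Remark \ref{limrem}(3), precompose with the exact embedding $F$ of Lemma \ref{appremb}, and read off the vanishing criterion from the characterization of $G$ in Remark \ref{limrem}(3). The paper's proof is terser (it leaves the verification that $\mathcal{F}'$ is cohomological and the exactness of $G$ implicit in the reference to Remark \ref{limrem}), but the construction and the argument for the zero behavior are identical to yours.
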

\begin{proof}
Denote by $Q$ the faithful embedding of $ \underline{C}$ into the category $\mathfrak{C} = \prod\limits_{(\to,C\in I)} C$ (which exists by Lemma \ref{appremb}). 
Denote by $\mathcal{F}':\mathfrak{C}^{op} \to R-\operatorname{mod}$ the reduced product of cohomological functors $\mathcal{F}_C$ (see Remark \ref{limrem}(3)). 
Now we take $\mathcal{F}=\mathcal{F}'\circ Q$. 
By the definition of $\mathcal{F}'$, if $\mathcal{F}_C(X)=\{0\}$ 
for $C$ large enough then $\mathcal{F}'(Q(X))=\{0\}$. 
Conversely, assume $\mathcal{F}'(Q(X))=\{0\}$. Then $\mathcal{F}_C(Q(X))=\{0\}$  for $C$ large enough. By the definition of $Q$, we have $\mathcal{F}_C(Q(X)) = \mathcal{F}_C(X)=\{0\}$  for $C$ large enough (that satisfy $X \in \operatorname{Obj} C$).
\end{proof}


Now we can prove 
Theorem \ref{extsunc} under less restrictive countability assumptions.
Already the partial case $R = \mathbb{Z}$ of this statement is quite interesting and non-trivial.

\begin{pr}\label{rcountpr}
Theorem \ref{extsunc} is valid in the case where $R$ is at most countable.
\end{pr}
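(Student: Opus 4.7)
The plan is to apply the countable case (Proposition \ref{exts}) to each countable triangulated subcategory $C\in I$ of $\underline{C}$ and then to glue the resulting functors via Corollary \ref{funct}. Since $R$ is at most countable and every $C\in I$ has countable sets of both objects and morphisms, all hom-sets of $C$ are automatically countable, so Proposition \ref{exts} will be applicable to $C$ as soon as an appropriate countable extension-closed Karoubi-closed subset of $\operatorname{Obj} C$ has been singled out.

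Fix $Y\in \operatorname{Obj}\underline{C}\setminus E$. For each $C\in I$ satisfying $Y\in \operatorname{Obj} C$, let $E_C\subset \operatorname{Obj} C$ denote the envelope (computed \emph{inside} $C$) of $E\cap \operatorname{Obj} C$. Being a subset of the countable set $\operatorname{Obj} C$, $E_C$ is countable, and by construction it is extension-closed and Karoubi-closed in $C$. The key observation is the inclusion $E_C\subset E$: any extension or retract formed inside $C$ is \emph{a fortiori} one formed inside $\underline{C}$, and since $E$ is already extension-closed and Karoubi-closed in $\underline{C}$, the envelope of $E\cap \operatorname{Obj} C$ inside $C$ must lie in $E$. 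In particular $Y\notin E_C$, so Proposition \ref{exts} produces an $R$-linear cohomological functor $\mathcal{F}_C\colon C^{op}\to R-\operatorname{mod}$ with $\mathcal{F}_C|_{E_C}=0$ and $\mathcal{F}_C(Y)\neq \{0\}$. For $C\in I$ with $Y\notin \operatorname{Obj} C$ I simply set $\mathcal{F}_C=0$.

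Applying Corollary \ref{funct} to the family $\{\mathcal{F}_C\}_{C\in I}$ yields an $R$-linear cohomological functor $F^Y\colon \underline{C}^{op}\to R-\operatorname{mod}$ with $F^Y(X)=\{0\}$ if and only if $\mathcal{F}_C(X)=\{0\}$ for sufficiently large $C\in I$ satisfying $X\in \operatorname{Obj} C$. The two required properties then fall out. For $e\in E$: by Proposition \ref{appr} fix some $x\in I$ containing $\{e,Y\}$; every $C\in I_x$ then contains both $e$ and $Y$, and the inclusion $e\in E\cap \operatorname{Obj} C\subset E_C$ gives $\mathcal{F}_C(e)=\{0\}$, whence $F^Y(e)=\{0\}$. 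For $Y$ itself: pick any $x\in I$ with $Y\in \operatorname{Obj} x$; for every $C\in I_x$ we have $\mathcal{F}_C(Y)\neq \{0\}$ by construction, so $F^Y(Y)\neq \{0\}$.

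The only step where any actual thinking occurs is the verification that $E_C\subset E$, which is what guarantees that the countable instance of the problem solved on each $C$ is still ``nontrivial'' in the sense that $Y$ lies outside the forbidden set. Everything else is a direct bookkeeping application of the tools (Propositions \ref{exts} and \ref{appr} and Corollary \ref{funct}) already developed in the section.
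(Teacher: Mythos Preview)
Your proof is correct and follows the same route as the paper: build $\mathcal{F}_C$ on each countable $C\in I$ via Proposition \ref{exts}, then glue through Corollary \ref{funct}. The one cosmetic difference is that the paper applies Proposition \ref{exts} directly to $E\cap\operatorname{Obj} C$, whereas you first pass to its $C$-envelope $E_C$; but your own argument that $E_C\subset E$ (together with $E_C\subset\operatorname{Obj} C$ and $E\cap\operatorname{Obj} C\subset E_C$) shows in fact $E_C=E\cap\operatorname{Obj} C$, so the detour is harmless and the two proofs coincide.
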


\begin{proof}
We define certain $R$-linear cohomological functors $F^Y_C$ from $C$ to $R-\operatorname{mod}$ for $C \in I$.
In the case $Y$ is not an object of $C$ we set $F^Y_C = 0$.
If $Y \in \operatorname{Obj} C$ we take $F^Y_C$ being a cohomological functor such that $F^Y_C|_{E\cap \operatorname{Obj} C} =0$ and $F^Y_C(Y)\neq \{0\}$ 
(as constructed in Proposition \ref{exts}).

Applying Corollary \ref{funct}, we obtain an 
$R$-linear cohomological functor $F^Y: \underline{C}^{op} \to R-\operatorname{mod}$ such that $F^Y(X)$ is zero if and only if $F^Y_C(X)=\{0\}$  for $C$ large enough  (and $X \in \operatorname{Obj} C$). 
Certainly, it follows that $F^Y|_E = 0$. 
Moreover, if  $F^Y(Y)=\{0\}$ then $F^Y_C(Y) = \{0\}$ for some $C$ containing $Y$. Thus $Y$ belongs to $E\cap \operatorname{Obj} C \subset E$, which contradicts our assumption on $Y$.

\end{proof}

\subsection{The  case of an uncountable $R$}\label{rcas}

\begin{defi}\label{redprod}
For a directed set $I$ and a family $(M_i:\ i\in I)$
of objects of some (closed with respect to arbitrary 
filtered limits and colimits) abelian category 
  we define the reduced product $\prod\limits_{(\to,i\in I)} M_i$ 
	 as $\ilim\limits_{k\in I} \prod\limits_{i\in I_k} M_i$ 

\end{defi}

Certainly, the reduced product can also be characterized as the cokernel of the natural 
	 morphism $\coprod\limits_{k\in I}(\prod\limits_{k\not\succeq  n} M_n) \to \prod\limits_{n\in I} M_n$ since  both of these have the same universal property. 

\begin{pr}\label{propert}
Let $M_i\in \operatorname{Obj} \underline{A}$ for ${i\in I}$, 
where $I$ is a directed set and $\underline{A}$ is an abelian category.
Assume for any $i\in I$ the object $M_i$ is equipped with  an action of 
 a commutative unital ring 
$R_i$, where $(R_i, \phi_{j,i})$ is an inductive system of (
unital) rings (with connecting homomorphisms $ \phi_{j,i}:R_j\to R_i$ corresponding to $\preceq$). 

1. Then there is a natural action of $\ilim\limits_{i \in I} R_i$ on $\prod\limits_{(\to,i\in I)} M_i$. 

2. Let $(L_i)_{i\in I}$ be objects of $\underline{A}$ that are 
equipped with   $R_i$-actions  also (for all ${i\in I}$), 
and let $f_i:M_i \to L_i$ be  a family of morphisms that respect the corresponding actions. 
Then 
 the corresponding morphism $f:\prod\limits_{\to} M_i 
\to \prod\limits_{\to} L_i$ respects the  $\ilim\limits_{i \in I} R_i$ -action.
\end{pr}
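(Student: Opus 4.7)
The plan is to compute everything at finite stages using the explicit description $\prod_{(\to,i\in I)} M_i = \varinjlim_{k\in I}\prod_{i\in I_k} M_i$, and then to pass to the colimit. Throughout, I write $R=\varinjlim_{i\in I}R_i$.

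First I would equip each $\prod_{i\in I_k} M_i$ with a canonical $R_k$-action: compose the ring homomorphism $R_k\to \prod_{i\in I_k} R_i$ sending $r_k$ to $(\phi_{k,i}(r_k))_{i\in I_k}$ with the coordinate-wise action of $\prod_{i\in I_k} R_i$ on $\prod_{i\in I_k} M_i$. The key compatibility is the following: for $k'\succeq k$ the transition map $p_{k,k'}\colon \prod_{i\in I_k} M_i\to \prod_{i\in I_{k'}} M_i$ in the colimit defining the reduced product is $R_k$-linear, where $R_k$ acts on the target via $\phi_{k,k'}\colon R_k\to R_{k'}$. This holds because $p_{k,k'}$ is the obvious projection (since $I_{k'}\subseteq I_k$), and on each retained factor $M_i$ the two actions by $r_k$ both reduce to multiplication by $\phi_{k,i}(r_k)=\phi_{k',i}(\phi_{k,k'}(r_k))$, which is forced by the cocycle identity of the inductive system $(R_i)$.

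For part 1, given $r\in R$ represented by some $r_k\in R_k$, multiplication by $r_k$ produces an endomorphism of the cofinal sub-diagram $(\prod_{i\in I_j} M_i)_{j\succeq k}$ of the system defining the reduced product, and hence an endomorphism $\mu_r$ of $\prod_{(\to)} M_i$. Well-definedness with respect to the choice of representative is the standard fact: if $r_k\in R_k$ and $r_{k'}\in R_{k'}$ both represent $r$ then $\phi_{k,m}(r_k)=\phi_{k',m}(r_{k'})$ for some $m\succeq k,k'$, so both recipes yield the same endomorphism of $\prod_{i\in I_m} M_i$ and hence of the colimit. Additivity and multiplicativity of $r\mapsto \mu_r$ reduce, upon choosing a common representative index for the finitely many elements involved, to the corresponding identities for the $R_m$-action on $\prod_{i\in I_m} M_i$, which are immediate from the $R_i$-module axioms on each $M_i$. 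Unitality is checked the same way.

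For part 2, the induced morphism $f$ is, by the universal property of the colimit, obtained from the compatible family $\prod_{i\in I_k} f_i$. Each $f_i$ is $R_i$-linear by hypothesis, hence $\prod_{i\in I_k} f_i$ is $R_k$-linear for the structures constructed above; passing to the colimit shows that $f$ commutes with every $\mu_r$, i.e.\ is $R$-linear. The only real obstacle is bookkeeping, since the transition maps in the colimit defining the reduced product go in the opposite direction to those of the inductive system $(R_i)$, so one must track carefully which side of each arrow every element and action lives on; no deeper input is required beyond the existence of filtered colimits and products in $\underline{A}$.
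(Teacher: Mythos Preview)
Your proposal is correct and follows essentially the same approach as the paper: equip each finite-stage product $\prod_{i\in I_k} M_i$ with the diagonal $R_k$-action via the connecting homomorphisms, check compatibility with the projection maps, and pass to the colimit. The paper phrases this as constructing compatible ring homomorphisms $g_j\colon R_j\to \operatorname{End}_{\underline{A}}(\prod_{\to} M_n)$ and then invoking the universal property of $\varinjlim R_i$, which is exactly the representative-based description you give; part 2 is handled identically in both.
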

\begin{proof}
1.
It suffices to construct ring homomorphisms $R_i \stackrel{g_i}\to \operatorname{End}_{\underline{A}}(\prod\limits_{(\to,n\in I)} M_n)$ such that the composition
 $R_j \stackrel{\phi_{j,i}}\to R_i \stackrel{g_i}\to \operatorname{End}_{\underline{A}}(\prod\limits_{\to} M_n)$ equals  $g_j$ for any $i\succeq  j \in I$ (i.e., to verify that these actions are compatible).

For any $i \succeq  j \in I$ we have an action of $R_j$ on $M_i$ via the composition $R_j \to R_i \to \operatorname{End}_{\underline{A}}(M_i)$. 
This yields a diagonal action of $R_j$ on $\prod\limits_{k\in I_n} M_k$ for any $n\succeq  j$. 
Moreover, if $n\succeq  i$, this action is compatible with the corresponding action of $R_i$.
So we obtain an action $g_j$ of $R_j$ on $\ilim\limits_{n\in I_j} \prod\limits_{I_n} M_k \cong \prod\limits_{\to} M_n$. 
By construction, the action of $R_j$ on  $\ilim\limits_{n\in I_i}  \prod\limits_{I_n} M_k \cong \ilim\limits_{n\in I_j} \prod\limits_{I_n} M_k \cong \prod\limits_{\to} M_n$ is compatible with the corresponding action of $R_i$.

2. To prove the 
assertion 
 it suffices to verify that $f$ respects the action of $R_i$ for any $i\in I$. 
Certainly, the morphism $\ilim\limits_{n\in I_i} \prod\limits_{I_n} M_k \to \ilim\limits_{n\in I_i} \prod\limits_{I_n} N_k$ respects the action of $R_i$ since it is induced by the $R_i$-linear morphisms $\prod\limits_{I_n} M_k \to \prod\limits_{I_n} N_k$ (for $n \in I_i$).
\end{proof}

Now we are able to conclude the proof of Theorem \ref{extsunc}.

\begin{proof}[Proof of Theorem \ref{extsunc} for an arbitrary $R$]

Note that $R$ is equal to  the inductive limit of its (at most) countable unital subrings; thus $R = \ilim R_i$ for some directed set of at most countable (commutative unital) rings $R_i$. Next, $\underline{C}$ is $R_i$-linear for each of these $R_i$.
Hence (by Proposition \ref{rcountpr}) there exist $R_i$-linear cohomological functors $\hat{F}^Y_{R_i}: \underline{C}^{op} \to R_i-{\operatorname{mod}}$ such that $\hat{F}^Y_{R_i}|_E = 0$ and $\hat{F}^Y_{R_i}(Y)\neq \{0\}$. 
We take $F^Y_{R_i} = G_{R_i}\circ \hat{F}^Y_{R_i}: \underline{C}^{op} \to \ab$, where $G_{R_i}$ is the corresponding forgetful functor.

Now denote by $\hat{F}^Y$  the reduced product of cohomological functors $\prod\limits_{\to} F^Y_{R_i}$ (see Remark \ref{limrem}(3)). By Proposition \ref{propert}, this functor factors through 
$ R-\operatorname{mod}$. Denote the corresponding functor $ \underline{C}^{op} \to R-\operatorname{mod}$ by $F^Y$. 
Since $F^Y$ is $R_i$-linear for any $R_i$, $F^Y$ is also $R$-linear. 
By Remark \ref{limrem}(3), $F^Y(X)$ is zero if and only if $\hat{F}^Y_{R_i}(X)=\{0\}$  for $R_i$ large enough. 
Hence $F^Y|_E=0$ and $F^Y(Y)\neq\{0\}$. 
\end{proof}


As a corollary we present a list of 
criteria for $E$ to be Karoubi-closed  and extension-closed in $\cu$.

\begin{coro}\label{mcoro}
Let $E$ be a subset of $\operatorname{Obj}  \underline{C}$.
Then the following conditions are equivalent.

1. There exists a set $\{F_i\},\ i\in I$, of cohomological functors from $ \underline{C}$ with values in some abelian categories $\underline{A}_i$ such that the set $\{c\in \operatorname{Obj}  \underline{C}: F_i(c)
=0\ \forall i\in I\}$ equals  $E$.

2. $E$ is an extension-closed Karoubi-closed subset of $\operatorname{Obj}  \underline{C}$.

3. There exists a set $\{F_i\}$ of $R$-linear cohomological functors on $ \underline{C}$ with values in $R-\operatorname{mod}$ such that the set $\{c\in \operatorname{Obj}  \underline{C}: F_i(c)= \{0\}\}$ equals  $E$.

4. There exists a single $R$-linear cohomological functor $F: \underline{C}^{op}\to  R-\operatorname{mod}$ such that the set $\{c\in \operatorname{Obj}  \underline{C}: F(c)= \{0\}\}$ equals  $E$.

5. Denote by $H: \underline{C}\to \underline{A}( \underline{C})$ the universal homological functor for $ \underline{C}$ ($\underline{A}( \underline{C})$ is the  abelianization of $ \underline{C}$ and we call $H$ a homological functor since it is covariant; see  Appendix A in \cite{krause}).
Denote by $\underline{A}(E)$ the Serre subcategory of $\underline{A}( \underline{C})$ generated by 
$H(E)$. Then $H(\operatorname{Obj}  \underline{C})\cap \operatorname{Obj} \underline{A} (E) = H(E)$.

\end{coro}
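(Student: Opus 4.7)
The plan is to split the equivalence into two clusters: $(1)\Leftrightarrow(2)\Leftrightarrow(3)\Leftrightarrow(4)$ and $(2)\Leftrightarrow(5)$. For the first cluster, $(4)\Rightarrow(3)\Rightarrow(1)$ are immediate (any single functor is a one-element family, and $R$-$\operatorname{mod}$ is abelian), while $(1)\Rightarrow(2)$ is the standard observation that the common zero locus of any family of cohomological functors is extension-closed (by the long exact sequence associated to a distinguished triangle) and Karoubi-closed (since the values split along direct summands), intersections preserving both properties. The substantive step is $(2)\Rightarrow(4)$: given an extension- and Karoubi-closed $E$, Theorem \ref{extsunc} produces, for every $Y\in\operatorname{Obj} \underline{C}\setminus E$, an $R$-linear cohomological separator $F^Y\colon\underline{C}^{op}\to R\text{-}\operatorname{mod}$ with $F^Y|_E=0$ and $F^Y(Y)\neq\{0\}$. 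Since $\underline{C}$ is small, $\operatorname{Obj} \underline{C}\setminus E$ is a set, so $F:=\bigoplus_{Y\notin E}F^Y$ is a well-defined $R$-linear cohomological functor (direct sums being exact in $R\text{-}\operatorname{mod}$) whose zero locus is exactly $E$.

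For $(2)\Leftrightarrow(5)$ I would invoke the universal property of $H$ in an essential way. Assume (2) and take $F$ as above. Viewed as a covariant functor $\underline{C}\to(R\text{-}\operatorname{mod})^{op}$, it is homological, and by the universal factorisation it equals $G\circ H$ for some exact $G\colon\underline{A}(\underline{C})\to(R\text{-}\operatorname{mod})^{op}$. Since $G$ kills $H(E)$ and is exact, it kills every object of the Serre subcategory $\underline{A}(E)$ generated by $H(E)$. Hence $H(c)\in\underline{A}(E)$ forces $F(c)=G(H(c))=0$, so $c\in E$ and in particular $H(c)\in H(E)$; the reverse inclusion is trivial, establishing (5). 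Conversely, assuming (5), the composite $\pi\circ H\colon\underline{C}\to\underline{A}(\underline{C})/\underline{A}(E)$ with the Serre quotient $\pi$ is homological, hence yields a cohomological functor into the opposite abelian category; its zero locus is $\{c:H(c)\in\underline{A}(E)\}$, which by (5) together with the full faithfulness of $H$ (so $H(c)\cong H(e)$ implies $c\cong e$, combined with the Karoubi-closure of $E$ guaranteed by $(2)$) is precisely $E$. This yields (1), and thence (2) by the first cluster.

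The main potential obstacle is variance bookkeeping: the paper's cohomological functors are contravariant on $\underline{C}$ while the universal homological functor $H$ is covariant, so one must pass to opposite abelian categories when invoking the universal property of $H$. Once this is reconciled, all remaining ingredients are formal: smallness of $\underline{C}$ ensures the sum defining $F$ is set-indexed, exactness of $G$ propagates vanishing from $H(E)$ to its Serre closure, and the other implications collapse to definitions and Theorem \ref{extsunc}.
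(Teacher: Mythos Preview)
Your approach matches the paper's in all essentials: the trivial implications, the appeal to Theorem~\ref{extsunc} for the substantive step, and the two-way link between (5) and the rest via the universal property of $H$ and the Serre quotient are all handled the same way. (The paper combines the separators $F^Y$ by a product rather than a direct sum to get $(3)\Rightarrow(4)$, but either works since both products and coproducts are exact in $R\text{-}\operatorname{mod}$.)

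There is, however, one circular step. In your argument for $(5)\Rightarrow(1)$ you identify the zero locus of $\pi\circ H$ with $\{c:H(c)\in\obj\underline{A}(E)\}$, reduce via (5) to $\{c:H(c)\in H(E)\}$, and then conclude $c\in E$ by invoking ``the Karoubi-closure of $E$ guaranteed by $(2)$.'' But you are in the process of deriving (1), and from it (2), starting from (5); you cannot appeal to (2) here. The fix is immediate: the universal homological functor $H$ is (the Yoneda-type embedding into finitely presented functors and hence) injective on objects, so $H(c)\in H(E)$ already gives $c\in E$ literally, with no need for full faithfulness or any closure hypothesis on $E$. The paper's proof makes exactly this identification, tacitly writing $\{c:H_E(c)=0\}=H(\obj\underline{C})\cap\obj\underline{A}(E)$ and then equating the latter with $H(E)$, hence with $E$.
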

\begin{proof}
Obviously, (1) $\Rightarrow$ (2), (3) $\Rightarrow$ (1), and (4) $\Rightarrow$ (3). 
The implication (2) $\Rightarrow$ (3) follows from Theorem \ref{extsunc}. 

The composition functor $H_E: \underline{C} \to \underline{A}( \underline{C})\to \underline{A}( \underline{C})/\underline{A}(E)$ (the latter category is the corresponding Serre localization) yields a homological functor on $ \underline{C}$ whose set of zeros is $H(\operatorname{Obj}  \underline{C})\cap \operatorname{Obj}\underline{A}(E)$. Consider the opposite to this functor (i.e., the corresponding $H_E^{op}: \underline{C}^{op}\to (\underline{A}( \underline{C})/\underline{A}(E))^{op}$; this is a cohomological functor with values in $ (\underline{A}( \underline{C})/\underline{A}(E))^{op}$), we obtain (5) $\Rightarrow$ (1). Conversely, for any set of cohomological functors $\{F_i\}$ with values in $\underline{A}_i$ such that the set $\{c\in \operatorname{Obj}  \underline{C}: F_i(c)
=0\ \forall i\in I\}$ equals  $E$ there exist exact functors $G_i:(\underline{A}( \underline{C})/\underline{A}(E))^{op} \to \underline{A}_i$ such that $G_i\circ H_E^{op} = F_i$ (by Lemma A.2 of \cite{krause}).  Hence the set  $\{c\in \operatorname{Obj}  \underline{C}: H_E(c)=0\} = H(\operatorname{Obj}  \underline{C}^{op})\cap \operatorname{Obj}\underline{A}(E)$ is a subset of $\{c\in \operatorname{Obj}  \underline{C}: F_i(c)=0\ \forall i\in I\} = E$. Thus $H(\operatorname{Obj}  \underline{C}^{op})\cap \operatorname{Obj}\underline{A}(E)$ equals to $H(E)$, and we obtain condition (5).

So it remains  to prove (3) $\Rightarrow$ (4). 
Assume (3) to be fulfilled. Certainly, the product functor $F(-) = \prod\limits_{Y \in (\operatorname{Obj}  \underline{C}) \setminus E} F^Y(-)$ is $R$-linear and cohomological. Moreover, $F(X) =\{0\}$ if and only if $F^Y(X)  =\{0\}$ for every $Y \in (\operatorname{Obj}  \underline{C}) \setminus E$. Thus $F(X) =\{0\}$ if and only if $X \in E$.
\end{proof}

\section{An application: computing envelopes via "localizing  the coefficients"}\label{applc}

For any prime ideal $p \triangleleft R$ denote the localization of $R$ at $p$ by $R_p$.
Consider the  category $ \underline{C}_p$ whose objects are those of  $ \underline{C}$ and whose morphisms are obtained by tensoring the $ \underline{C}$-ones by $R_p$ (over $R$).
Denote by  $L_p$ the obvious functor $ \underline{C}\to  \underline{C}_p$. We recall some well-known properties of this construction (which were studied in detail in Appendix A.2 of \cite{kellyth} in the case $R=\z$).

\begin{pr}\label{trtens}
$ \underline{C}_p$ is a triangulated category, whereas $L_p$ is isomorphic to the Verdier quotient  functor for the localization of $ \underline{C}$ by 
 its triangulated subcategory  generated by  $\{\operatorname{Cone}(X \stackrel{s\operatorname{id}_X}\to X) | s\in R\setminus p, X\in \operatorname{Obj}  \underline{C}\}$.
\end{pr}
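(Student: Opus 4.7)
The plan is to prove both claims simultaneously by identifying $\underline{C}_p$ with the Verdier quotient $\underline{C}/\underline{T}$, where $\underline{T}$ denotes the triangulated subcategory of $\underline{C}$ generated by $\mathcal{G} = \{\operatorname{Cone}(s\cdot\operatorname{id}_X) : s \in R\setminus p,\ X \in \operatorname{Obj}\underline{C}\}$; the triangulated structure on the quotient then transfers to $\underline{C}_p$, and the description of $L_p$ as a quotient functor is built in. First I would verify that $L_p$ annihilates $\underline{T}$: for $s \in R\setminus p$ the image $L_p(s\cdot\operatorname{id}_X)$ is multiplication by a unit of $R_p$, hence an isomorphism, so its cone is zero in $\underline{C}_p$; since the generators of $\underline{T}$ are killed, so is every object of $\underline{T}$. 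By the universal property of Verdier localization this produces a triangulated factorization $L_p = \bar{L}_p \circ Q$, where $Q:\underline{C}\to\underline{C}/\underline{T}$ is the quotient and $\bar{L}_p : \underline{C}/\underline{T} \to \underline{C}_p$ is the identity on objects.

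The substance is to show $\bar{L}_p$ is fully faithful. Since $Q(s\cdot\operatorname{id}_X)$ is invertible for every $s\in R\setminus p$, each Hom-set $(\underline{C}/\underline{T})(X,Y)$ inherits an $R_p$-module structure extending its $R$-structure, yielding a canonical $R_p$-linear comparison map
$$\varphi_{X,Y} : \underline{C}(X,Y) \otimes_R R_p \longrightarrow (\underline{C}/\underline{T})(X,Y)$$
whose post-composition with $\bar{L}_p$ is the tautological isomorphism onto $\underline{C}_p(X,Y) = \underline{C}(X,Y) \otimes_R R_p$. Thus proving that $\varphi_{X,Y}$ is bijective forces $\bar{L}_p$ to be bijective on morphisms, which gives the desired equivalence.

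The key technical input, which I expect to be the main obstacle, is a \emph{detection lemma}: for every $T \in \operatorname{Obj}\underline{T}$ and every $Z\in\operatorname{Obj}\underline{C}$, both $\underline{C}(Z,T)$ and $\underline{C}(T,Z)$ are $(R\setminus p)$-torsion. The base case $T = \operatorname{Cone}(s\cdot\operatorname{id}_X)$ is handled by chasing the long exact sequences obtained from the defining triangle $X \xrightarrow{s} X \to T \to X[1]$ (one finds annihilation by $s^2$), and the inductive step for shifts, extensions, and retracts is immediate from stability of torsion under these operations. To conclude surjectivity of $\varphi_{X,Y}$ I would use left calculus of fractions: any morphism $X \to Y$ in $\underline{C}/\underline{T}$ is represented by a roof $X \xrightarrow{f} Z \xleftarrow{t} Y$ with $\operatorname{Cone}(t)\in\underline{T}$, and the detection lemma applied to $\underline{C}(Z,\operatorname{Cone}(t))$ supplies some $s\in R\setminus p$ together with $g : Z \to Y$ satisfying $t\circ g = s\cdot\operatorname{id}_Z$, rewriting the roof as $s^{-1}\cdot Q(g\circ f)$, which manifestly lies in the image of $\varphi_{X,Y}$. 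For injectivity I would use that $Q(f) = 0$ forces $f$ to factor through some $T\in\underline{T}$; the detection lemma then yields $s \in R\setminus p$ with $s\cdot f = 0$, so $f\otimes 1 = 0$ in $\underline{C}(X,Y)\otimes_R R_p$. This completes the identification $(\underline{C}/\underline{T})(X,Y) \cong \underline{C}(X,Y)\otimes_R R_p$, and hence the proposition.
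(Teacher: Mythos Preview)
Your argument is essentially correct and is the standard route to this result; however, the paper does not actually prove the proposition at all --- it simply cites Proposition B.1.5 of \cite{cdet}. So your proposal is not ``the same approach'' but rather an honest proof where the paper only gives a reference; the content of your sketch is in fact close to what one finds in the cited source (and in Appendix A.2 of \cite{kellyth} for $R=\z$).

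One small expository wrinkle: you invoke the universal property of the Verdier quotient to produce $\bar{L}_p:\underline{C}/\underline{T}\to\underline{C}_p$ before $\underline{C}_p$ is known to be triangulated and $L_p$ to be exact. The cleanest fix is to observe that your map $\varphi_{X,Y}$ is defined independently of $\bar{L}_p$ (you do this) and that the collection $\varphi$ assembles into an additive functor $\Phi:\underline{C}_p\to\underline{C}/\underline{T}$; once your detection lemma shows each $\varphi_{X,Y}$ is bijective, $\Phi$ is an isomorphism of categories, you can \emph{define} $\bar{L}_p=\Phi^{-1}$, transport the triangulated structure along it, and then verify $\bar{L}_p\circ Q=L_p$ directly. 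Alternatively, appeal to the fact that the Verdier quotient coincides with the Gabriel--Zisman localization at morphisms with cone in $\underline{T}$, which has the required universal property among additive functors. Either way the gap is purely organizational, not mathematical; the detection lemma and the calculus-of-fractions computations you outline are correct.
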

\begin{proof}
See Proposition B.1.5 of \cite{cdet}.

\end{proof}

Now we 
are able to prove  Corollary \ref{clocoeff}.

Certainly, the "only if" part of the statement is obvious. 

Now let us prove the converse implication. 
Denote by $E_p$ the envelope of $L_p(D)$ in $ \underline{C}_p$ (and recall that $E$ denotes the $ \underline{C}$-envelope of $D$).
Let us assume that $L_p(Y)$ belongs to $E_p$ for every maximal ideal $p$ of $R$.

Assume first that $ \underline{C}$ is small.
By 
Corollary \ref{mcoro},
there exists an $R$-linear  cohomological functor $F: \underline{C}^{op}\to  R-\operatorname{mod}$ 
 such that $F(X) = \{0\}$ if and only if $X \in E$. 
Certainly, for any $p$ (that is a maximal ideal of $R$) the correspondence $F_p:X\mapsto F(X)\otimes_R R_p$ yields a cohomological functor on $ \underline{C}_p$. 
Since $F_p|_{L_p(D)} = 0$, we also have $F_p|_{E_p} = 0$.
Since for any 
maximal ideal $p$ of $R$ the object $L_p(Y)$ belongs to $E_p$, we have $F_p(Y) =F(Y)\otimes_R R_p= \{0\}$ (for any 
$p$). 
 Thus $F(Y) = \{0\}$; hence  $Y$ belongs to $E$.

It remains to  deduce the general case of the corollary from the "small" one. 
Certainly, for $p$ running through maximal ideals of $R$ there exist finite sets  $D_p\subset D$ 
such that $L_p(Y)$ belongs to the $
 \underline{C}_p$-envelope of $L_p(D_p)$. Next, there exists a small full triangulated  $R$-linear subcategory $ \underline{C}'$ of $ \underline{C}$ whose set
of objects contains $(\cup_p D_p)\cup \{Y\}$. Denote $E\cap \operatorname{Obj}  \underline{C}'$ by $E'$; for any maximal $p\trianglelefteq R$ denote 
the corresponding localization of coefficients at $p$  functor $\underline{C}'\to \underline{C}'_p$ by $L'_p$.
Since $\underline{C}'_p$ is a full subcategory of $  \underline{C}_p$ for any $p$, $L'_p(Y)$ belongs to
the $\underline{C}'_p$-envelope of $L'_p(E)$ (note that the latter set contains the $\underline{C}'_p$-envelope of $L'_p(D_p)$).
 Since our corollary is valid for  the triple $(\underline{C}',E', Y)$, we obtain that $Y$ belongs to $E'\subset E$.

\begin{rema}\label{runion}
  In the "motivic" applications the authors have in mind the usage of Gabber's resolution of singularities results (see the Introduction) does not "naturally" yield a single $D$ for all $p$. Instead, for any maximal ideal $p$ of $R$ we can find certain $D_p\subset \operatorname{Obj}  \underline{C}$ such that $L_p(Y)$ belongs to the $
 \underline{C}_p$ envelope of $L_p(D_p)$. Yet this certainly implies (somewhat similarly to the reasoning above) that $Y$ belongs to the $ \underline{C}$-envelope of $ \cup_{p}D_p$.

\end{rema}

\end{document}